\newtheorem{Theorem}{Theorem}
\newtheorem{Lemma}[Theorem]{Lemma}
\newtheorem{Proposition}[Theorem]{Proposition}
\newtheorem{Corollary}[Theorem]{Corollary}
\renewcommand{\phi}{\varphi}
\newcommand{\C}{\operatorname{C}}
\newcommand{\N}{\operatorname{N}}
\newcommand{\Z}{\operatorname{Z}}
\newcommand{\Aut}{\operatorname{Aut}}
\newcommand{\pcore}{\operatorname{O}}
\newcommand{\GL}{\operatorname{GL}}
\newcommand{\PSL}{\operatorname{PSL}}
\newcommand{\PSU}{\operatorname{PSU}}
\newcommand{\SL}{\operatorname{SL}}
\newcommand{\SU}{\operatorname{SU}}
\newcommand{\GU}{\operatorname{GU}}
\newcommand{\E}{\operatorname{E}}
\newcommand{\F}{\operatorname{F}}
\newcommand{\Syl}{\operatorname{Syl}}
\def\bG{{\bf G}}
\def\bH{{\bf H}}
\def\bL{{\bf L}}
\def\bM{{\bf M}}
\def\bT{{\bf T}}
\mathchardef\ordinarycolon\mathcode`\:  
\title{On $2$-blocks with minimal nonabelian defect groups II}
\author{C.\,W.\,Eaton,\\ School of Mathematics,\\University of Manchester,\\Manchester, UK,\\\texttt{charles.eaton@manchester.ac.uk} \and
B.\,Külshammer,\\Mathematical Institute,\\Friedrich Schiller University,\\Jena, Germany,\\\texttt{kuelshammer@uni-jena.de} \and
B.\ Sambale,\\Mathematical Institute,\\Friedrich Schiller University,\\Jena, Germany,\\\texttt{benjamin.sambale@uni-jena.de}
}
\date{\today}
\begin{document}
\frenchspacing
\maketitle
\begin{abstract}\noindent
We determine the structure of $2$-blocks with minimal nonabelian defect groups, by making use of the classification of finite simple groups.
\end{abstract}
\textbf{Keywords:} blocks of finite groups, minimal nonabelian defect groups.

In \cite{Sambalemna}, the third author of this paper investigated $2$-blocks $B$ of finite groups whose defect groups $D$ are minimal nonabelian; this means that $D$ is nonabelian but all proper subgroups of $D$ are abelian. In most cases, it was possible to determine the numerical invariants $k(B)$, $l(B)$ and $k_i(B)$, for $i\ge 0$. Here, as usual, $k(B)$ denotes the number of irreducible ordinary characters in $B$, $l(B)$ denotes the number of irreducible Brauer characters in $B$, and $k_i(B)$ denotes the number of irreducible ordinary characters of height $i$ in $B$, for $i\ge 0$.

However, for one family of $2$-blocks only partial results were obtained in \cite{Sambalemna}. Here we deal with this remaining family of $2$-blocks, by making use of the classification of the finite simple groups. Our main result is as follows:

\begin{Theorem}\label{main}
Let $B$ be a nonnilpotent $2$-block of a finite group $G$ with defect group
\begin{equation}\label{defD}
D=\langle x,y: x^{2^r}=y^{2^r}=[x,y]^2=[x,x,y]=[y,x,y]=1\rangle
\end{equation}
of order $2^{2r+1}\ge 32$. Then $B$ is Morita equivalent to $\mathcal{O}[D\rtimes E]$ where $E$ is a subgroup of $\Aut(D)$ of order $3$. In particular, we have
\begin{align*}
l(B)&=3,&k(B)&=\frac{5\cdot 2^{2r-2}+16}{3},&k_0(B)&=\frac{2^{2r}+8}{3},&k_1(B)&=\frac{2^{2r-2}+8}{3}.
\end{align*}
\end{Theorem}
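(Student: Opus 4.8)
The plan is to reduce the whole statement to the single claim that $B$ is Morita equivalent to $\mathcal{O}[D\rtimes E]$: the displayed values of $l(B)$, $k(B)$, $k_0(B)$ and $k_1(B)$ are invariants of the Morita equivalence class of $B$ and can be read off directly from the soluble group $H:=D\rtimes E$, where they agree with the partial results of \cite{Sambalemna}. Before anything else I record two facts about $D$. First, since $D/\Phi(D)\cong C_2\times C_2$, the kernel of the natural map $\Aut(D)\to\GL_2(2)\cong S_3$ is a $2$-group, so every $2'$-subgroup of $\Aut(D)$ has order $1$ or $3$; as the inertial quotient $E$ of $B$ is a $2'$-group and $B$ is nonnilpotent, $|E|=3$ and the fusion system of $B$ coincides with that of $D\rtimes E$. (This is exactly the configuration left undecided in \cite{Sambalemna}.) Second, $D$ has exponent $2^r$ while $D/D'\cong C_{2^r}\times C_{2^r}$, so $D$ is \emph{not} metacyclic.

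Next I pass to a counterexample $(G,B)$ with $|G|$ minimal and invoke the standard reduction machinery for Morita equivalences of $2$-blocks (the behaviour of blocks under $\pcore_{2'}(G)$, under central extensions by $2'$-groups, under direct products, and under the Fong--Reynolds correspondence). This lets me assume $\pcore_{2'}(G)=1$, $\Z(G)$ a $2$-group, $G=\langle D^G\rangle$, and that either $D\trianglelefteq G$, or $\F^*(G)=\pcore_2(G)\times L_1\times\cdots\times L_n$ has at least one quasisimple component, the $L_i$ being transitively permuted by $G$. If $D\trianglelefteq G$, then Külshammer's theorem on blocks with normal defect group gives $B\cong\mathcal{O}_\alpha[D\rtimes E]$ for some $\alpha\in H^2(E,\mathcal{O}^\times)$, and since $E\cong C_3$ has trivial cohomology with these coefficients, $B\cong\mathcal{O}[D\rtimes E]$ (this algebra is indecomposable because $E$ acts faithfully on $D$), contradicting the choice of $(G,B)$. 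Hence $G$ has components.

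The heart of the proof — the step I expect to be the main obstacle — is the component case, which is where the classification of finite simple groups is used. If $D\le\F^*(G)$, then $D$ is a defect group of a block of $\F^*(G)$ covered by $B$; decomposing that block along the direct product, and using that a minimal nonabelian group cannot be a direct product of two nontrivial groups, forces a single component $L$ possessing a $2$-block with defect group isomorphic to $D$. One must therefore run through the quasisimple groups — alternating groups, groups of Lie type in defining and in non-defining characteristic $2$, and sporadic groups — and use the known descriptions of their $2$-blocks and of the associated defect groups, together with the requirement that the order-$3$ inertial quotient be realised by the local structure, to show that none of them admits a $2$-block whose defect group is this specific non-metacyclic minimal nonabelian group. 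In the remaining situation $D\not\le\F^*(G)$, the abelian group $D\cap\F^*(G)$ is proper in $D$ and $D/(D\cap\F^*(G))$ acts faithfully on $\F^*(G)$; here too one uses the classification to restrict the possible components and the structure of $G/\F^*(G)$, and again derives a contradiction with $B$ being nonnilpotent of defect group $D$. Ruling out every case shows that no minimal counterexample exists, so $B\sim\mathcal{O}[D\rtimes E]$ in all cases.

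It remains to compute the invariants inside $H=D\rtimes E$, a group of order $3\cdot 2^{2r+1}$ with a unique $2$-block. A direct analysis of the action of $E$ on $\operatorname{Irr}(D)$ suffices: $D$ is minimal nonabelian with $|D/D'|=2^{2r}$, $|\Z(D)|=2^{2r-1}$ and exactly $2^{2r-2}$ nonlinear irreducible characters, all of degree $2$ (so $k(D)=5\cdot 2^{2r-2}$), and the order-$3$ automorphism fixes precisely one linear and one nonlinear character of $D$. Extending the $E$-fixed characters in three ways each and inducing one character from every free $E$-orbit then yields $l(B)=3$ together with the stated values of $k(B)$, $k_0(B)$ and $k_1(B)$, completing the proof.
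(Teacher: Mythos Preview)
Your high-level strategy matches the paper's---reduce via a minimal counterexample to the claim that no quasisimple group has a $2$-block with defect group $D$, then read off the invariants in $D\rtimes E$---and your character computation at the end is correct. But the reduction step has a genuine gap.

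The paper's reduction rests on two ingredients you never invoke. First, a lemma specific to this $D$: if $H\trianglelefteq G$ and $b$ is a block of $H$ covered by $B$ with defect group strictly smaller than $D$, then $b$ is \emph{nilpotent}. The proof uses that the order-$3$ inertial quotient permutes the three maximal subgroups of $D$ transitively, so $D\cap H\subseteq\Phi(D)$; this forces the unique block of $DH$ over $b$ to have inertial index $1$, hence (being controlled) to be nilpotent, and then $b$ is nilpotent by K\"ulshammer--Puig \cite{exnilblocks}. Second, the main theorem of \cite{exnilblocks} on extensions of nilpotent blocks: whenever such a nilpotent $b$ occurs, $B$ is Morita equivalent to a block of a strictly smaller configuration, contradicting minimality. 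Together these two facts force the block of $\E(G)$ covered by $B$---and then, via the central-product lemma, the block of some single component---to have defect group \emph{equal to} $D$, reducing cleanly to the quasisimple case. The same argument is what pins down $\pcore_2(G)\subseteq D'$ and hence $\F(G)=\Z(G)$, a step you also skip.

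Your case ``$D\not\le\F^*(G)$'' is precisely where this machinery does the work, and your sketch there (``use the classification to restrict the possible components and the structure of $G/\F^*(G)$'') is not a viable plan: without the nilpotent-block reduction you would have to analyse, for every quasisimple $L$ and every subgroup between $L$ and $\Aut(L)$, all $2$-blocks whose defect group merely intersects $L$ in some proper abelian subgroup of $D$---a far larger problem than showing no quasisimple group has a block with defect group exactly $D$. Incidentally, your decomposition $\F^*(G)=\pcore_2(G)\times L_1\times\cdots\times L_n$ with $G$ transitive on the $L_i$ is not justified either: $\F^*(G)$ is only a central product in general, and transitivity does not follow from your stated reductions.
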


Here $(\mathbb{K},\mathcal{O},\mathbb{F})$ denotes a splitting $2$-modular system for $G$. Let again $D$ be a $2$-group as in \eqref{defD}. If $B$ is a nilpotent $2$-block of a finite group $G$ with defect group $D$, then, by the main result of \cite{Puig}, $B$ is Morita equivalent to $\mathcal{O}D$. So we have the following consequence of Theorem~\ref{main}.

\begin{Corollary}
Let $D$ be a $2$-group as in \eqref{defD}. Then Donovan's Conjecture (cf. \cite{KDonovan}) holds for $2$-blocks of finite groups with defect group $D$.
\end{Corollary}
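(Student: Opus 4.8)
The plan is to derive the Corollary formally from Theorem~\ref{main} together with Puig's theorem on nilpotent blocks; all the genuine work has already been done in the proof of Theorem~\ref{main}. Recall that Donovan's Conjecture (cf. \cite{KDonovan}) for the fixed finite $2$-group $D$ asserts that there are, up to Morita equivalence, only finitely many $2$-blocks of finite groups with defect group $D$. So let $B$ be any such block. Either $B$ is nilpotent, in which case the main result of \cite{Puig} shows that $B$ is Morita equivalent to $\mathcal{O}D$; or $B$ is nonnilpotent, in which case Theorem~\ref{main} applies and shows that $B$ is Morita equivalent to $\mathcal{O}[D\rtimes E]$ for some subgroup $E\le\Aut(D)$ of order $3$. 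Hence every $2$-block of a finite group with defect group $D$ is Morita equivalent to $\mathcal{O}D$ or to $\mathcal{O}[D\rtimes E]$ for such a subgroup $E$.

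It then remains only to check that finitely many Morita equivalence classes occur on the right-hand side. Since $D$ is finite, $\Aut(D)$ is a finite group and therefore contains only finitely many subgroups of order $3$; moreover, conjugate such subgroups yield isomorphic semidirect products $D\rtimes E$, so up to isomorphism there are only finitely many groups of the form $D\rtimes E$. Each of these is a fixed finite group, so its group algebra over $\mathcal{O}$ decomposes into finitely many blocks, each contributing a single Morita equivalence class. Together with the class of $\mathcal{O}D$ this produces only finitely many Morita equivalence classes of $2$-blocks of finite groups with defect group $D$, which is precisely the assertion of Donovan's Conjecture for $D$.

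There is essentially no obstacle at this stage: the a priori infinite family of pairs $(G,B)$ with defect group $D$ collapses, via Theorem~\ref{main} and the classification of nilpotent blocks, to the single finite list of algebras $\mathcal{O}D$ and $\mathcal{O}[D\rtimes E]$, and the finiteness of $\Aut(D)$ closes the argument. The only point worth stating explicitly is the reduction of the count to isomorphism types of $D\rtimes E$, which is immediate.
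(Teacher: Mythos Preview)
Your proposal is correct and matches the paper's own reasoning: the paper does not give a separate proof of the Corollary but simply notes, immediately before stating it, that nilpotent blocks with defect group $D$ are Morita equivalent to $\mathcal{O}D$ by Puig, so together with Theorem~\ref{main} only the two Morita classes $\mathcal{O}D$ and $\mathcal{O}[D\rtimes E]$ arise. Your additional remark that $\Aut(D)$ has only finitely many order-$3$ subgroups is harmless padding (and indeed superfluous, since up to isomorphism there is a single such semidirect product), but the argument is the same.
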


Combining Theorem~\ref{main} with results in \cite{Sambalemna}, we obtain the following.

\begin{Corollary}
Let $B$ be a $2$-block of a finite group with minimal nonabelian defect groups. Then $B$ satisfies Dade's Projective Conjecture (cf. \cite{Dade2}), Alperin's Weight Conjecture (cf. \cite{Alperinweights}), the Alperin-McKay Conjecture (cf. \cite{Alperinblock}), Brauer's $k(B)$-Conjecture (cf. \cite{BrauerLectures}), Olsson's $k_0(B)$-Conjecture (cf. \cite{OlssonGL}), Eaton's Conjecture (cf. \cite{Eaton}), Brauer's Height-Zero Conjecture (cf. \cite{BrauerLectures}), and the Eaton-Moreto Conjecture (cf. \cite{EatonMoreto}).
\end{Corollary}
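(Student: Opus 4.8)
The plan is to combine \cite{Sambalemna} with Theorem~\ref{main}, reducing everything to a single, completely understood family of blocks. By the classification of minimal nonabelian $2$-groups (due to Rédei), together with the results of \cite{Sambalemna}, all eight listed conjectures are already known for every $2$-block whose defect group is minimal nonabelian, with the sole exception of the ``remaining family'' singled out in the introduction: the \emph{nonnilpotent} blocks with defect group $D$ as in \eqref{defD} and $|D|\ge 32$. (In particular, the nilpotent blocks with such $D$ cause no difficulty, since by \cite{Puig} they are Morita equivalent to $\mathcal{O}D$, so that all the invariants in question reduce to those of the $2$-group $D$ itself.) Hence I would fix a nonnilpotent block $B$ as in Theorem~\ref{main}, so that $B$ is Morita equivalent to $\mathcal{O}[D\rtimes E]$ with $|E|=3$ and the values of $k(B),l(B),k_0(B),k_1(B)$ are those displayed there, and verify the conjectures one at a time.

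First I would dispose of the conjectures of purely global nature, which constrain only $k(B)$, $k_0(B)$, $k_1(B)$ and the defect group. Since $D'=[D,D]=\langle[x,y]\rangle$ has order $2$, we have $|D|=2^{2r+1}$ and $|D:D'|=2^{2r}$. A short calculation gives $k_0(B)+k_1(B)=k(B)$, so that $B$ has irreducible characters of heights $0$ and $1$ only. Brauer's $k(B)$-Conjecture and Olsson's $k_0(B)$-Conjecture then read
\begin{align*}
\frac{5\cdot 2^{2r-2}+16}{3}&\le 2^{2r+1}, & \frac{2^{2r}+8}{3}&\le 2^{2r},
\end{align*}
both of which are elementary for $2^{2r+1}\ge 32$; Eaton's Conjecture is a common refinement of these two bounds and follows from the same data, using that only heights $0$ and $1$ occur. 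For Brauer's Height-Zero Conjecture one notes that $D$ is nonabelian and $k_1(B)>0$, so that $B$ genuinely has characters of positive height, as required; and for the Eaton-Moreto Conjecture the smallest positive height occurring in $B$ is $1$, which agrees with the smallest positive height of an irreducible character of the minimal nonabelian group $D$ (whose nonlinear irreducible characters all have degree $2$).

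It then remains to treat the three local-global counting conjectures, namely Dade's Projective Conjecture, Alperin's Weight Conjecture and the Alperin-McKay Conjecture. Here my strategy is to exploit that $D\rtimes E$ is a solvable, hence $2$-solvable, group: for blocks of $2$-solvable groups all three conjectures are known to hold, so they hold for the block $\mathcal{O}[D\rtimes E]$ to which $B$ is Morita equivalent. I would transfer them to $B$ by observing that the equivalence of Theorem~\ref{main} preserves the defect group together with its fusion system and inertial quotient $E\cong C_3$, so that $B$ and the block of $D\rtimes E$ have matching Brauer correspondents $b_Q$ in the local subgroups $\N_G(Q)$ for $Q\le D$; each counting conjecture can then be read off as an identity between the now fully determined invariants of these correspondents. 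The main obstacle is precisely this transfer: the local-global conjectures are \emph{not} invariant under arbitrary Morita equivalences, so one must argue through the common local structure and not through the equivalence alone. The cleanest route, which I expect to succeed, is instead to observe that in \cite{Sambalemna} these three conjectures were already reduced, for the remaining family, to the single undetermined quantity $l(B)$; Theorem~\ref{main} supplies $l(B)=3$, whereupon every such reduction closes and all three conjectures follow.
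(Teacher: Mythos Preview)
Your proposal is correct and matches the paper's intent: the corollary is stated there without proof, merely as ``Combining Theorem~\ref{main} with results in \cite{Sambalemna}'', and your write-up is exactly a fleshed-out version of that combination, including the key observation that \cite{Sambalemna} had already reduced the local--global conjectures for the remaining family to the determination of $l(B)$. The brief detour about transferring the conjectures along the Morita equivalence is unnecessary (and, as you yourself note, not the right mechanism), but you land on the correct argument in the end.
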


We gather together some useful facts about blocks with defect groups as in \eqref{defD}, all of which may be found in or easily deduced from results in \cite{Sambalemna}.

\begin{Lemma}\label{Omnibus}
Let $B$ be a block of a finite group $G$ with defect group $D$ as in \eqref{defD}. Let $(D,b_D)$ be a maximal $B$-subpair. Then
\begin{enumerate}[(i)]
\item $\N_G(D,b_D)$ controls fusion of subpairs contained in $(D,b_D)$,
\item either $B$ is nilpotent or $\lvert\N_G(D,b_D):D\C_G(D)\rvert=3$, and in the latter case $z:=[x,y]$ is the only nontrivial fixed point of $\Z(D)$ under the action of $\N_G(D,b_D)$,
\item if $B$ is not nilpotent, then $\pcore_p(\Z(G)) \leq \langle z \rangle$,
\item if $Q \leq \Z(D)$ and $Q \not\leq D'$, then there is a $B$-subpair $(Q,b_Q)$ with $b_Q$ nilpotent,
\item if $D \in \Syl_2(G)$, then $G$ is solvable.
\end{enumerate}
\end{Lemma}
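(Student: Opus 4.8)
The plan is to read off (i)--(iv) from the determination, in \cite{Sambalemna}, of the possible fusion systems of $B$, and to treat (v) as a separate, genuinely group-theoretic input. By \cite{Sambalemna} the fusion system $\mathcal F=\mathcal F_{(D,b_D)}(B)$ on $D$ is controlled: it is either nilpotent or isomorphic to $\mathcal F_D(D\rtimes C_3)$, so every $\mathcal F$-isomorphism extends to an automorphism of $D$ and hence $\mathcal F=\mathcal F_D(\N_G(D,b_D))$. This last equality is exactly (i), since it says that $\N_G(D,b_D)$ controls $G$-fusion among the subpairs inside $(D,b_D)$.

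For (ii) I would argue as follows. The inertial quotient $E:=\N_G(D,b_D)/D\C_G(D)$ is a $2'$-group, it embeds into $\operatorname{Out}(D)$, and (lifting to a $2'$-subgroup of $\Aut(D)$ and using that a coprime automorphism acting trivially on the Frattini quotient is trivial) it embeds into $\Aut(D/\Phi(D))\cong\GL_2(2)\cong S_3$; as $\lvert E\rvert$ is odd this forces $\lvert E\rvert\in\{1,3\}$. If $\lvert E\rvert=1$ then $\N_G(D,b_D)=D\C_G(D)$, so $\mathcal F=\mathcal F_D(D)$ and $B$ is nilpotent. If $\lvert E\rvert=3$, a generator $\sigma$ of $E$ fixes $z$ since $D'=\langle z\rangle$ is characteristic of order $2$, and $\sigma$ induces on the homocyclic $2$-group $\Z(D)/\langle z\rangle$ an automorphism of order $3$ (this can be read off from the action on $D/\Phi(D)$); such an automorphism is fixed-point-free, because its reduction modulo $2$ has irreducible characteristic polynomial $t^2+t+1$, and a short lifting argument promotes fixed-point-freeness modulo $2$ to the whole group. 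Hence the only nontrivial fixed point of $\Z(D)$ under $\N_G(D,b_D)$ is $z$.

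Items (iii) and (iv) are then short consequences. For (iii): $\pcore_p(\Z(G))$ is a normal $2$-subgroup of $G$, hence lies in $D$, and being central it lies in $\Z(D)$; its elements are fixed under conjugation by $\N_G(D,b_D)$, so (ii) gives $\pcore_p(\Z(G))\le\langle z\rangle$ whenever $B$ is not nilpotent. For (iv): take the $B$-subpair $(Q,b_Q)\le(D,b_D)$; since $Q\le\Z(D)$, $D$ is a defect group of $b_Q$ and the fusion system of $b_Q$ equals $\C_{\mathcal F}(Q)=\mathcal F_D(\C_{\N_G(D,b_D)}(Q))$, the centralizer of $Q$ in $\mathcal F$. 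If $B$ is nilpotent there is nothing to prove. Otherwise $Q\not\le D'=\langle z\rangle$ forces, by (ii), that $E$ does not centralise $Q$, so $\C_{\N_G(D,b_D)}(Q)=D\C_G(D)$ and $\C_{\mathcal F}(Q)=\mathcal F_D(D)$; thus $b_Q$ is nilpotent.

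I expect (v) to be where essentially all the work lies, and it is the point at which the classification of the finite simple groups enters (matching the paper's abstract). The plan is twofold. First, show that a group $D$ as in \eqref{defD} with $\lvert D\rvert\ge 32$ is not isomorphic to a Sylow $2$-subgroup of any nonabelian finite simple group: this means going through the classification, using the known descriptions of the simple groups with abelian, dihedral, semidihedral, quaternion or wreathed Sylow $2$-subgroups, a finite check for the sporadic groups and the small alternating groups, and, for the groups of Lie type, the structure of their Sylow $2$-subgroups (unipotent radicals in defining characteristic $2$, normalisers of maximal tori otherwise) — in each case one checks that a minimal nonabelian group of order $\ge 32$ of the required shape does not occur (note that $D/\langle z\rangle$ is already a non-cyclic homocyclic abelian group of order $\ge 16$, which excludes most candidates outright). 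Second, run a minimal-counterexample argument: replacing $G$ by $G/\pcore_{2'}(G)$ and analysing the generalised Fitting subgroup together with a minimal normal subgroup reduces the general case to that of a quasisimple group, which contradicts the first step. Alternatively one simply cites the corresponding assertion in \cite{Sambalemna}. Apart from (v), the proof is bookkeeping around the controlled-fusion structure recalled above.
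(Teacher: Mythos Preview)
Your treatment of (i)--(iv) is correct and is precisely the kind of unpacking the paper has in mind: the paper gives no proof of this lemma at all, merely recording that ``all of [these facts] may be found in or easily deduced from results in \cite{Sambalemna}''. Your fusion-system arguments are the natural way to extract them from that reference.

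Where you go astray is in (v). You write that (v) ``is the point at which the classification of the finite simple groups enters (matching the paper's abstract)'' and sketch a case-by-case elimination of simple groups by their Sylow $2$-structure, followed by a minimal-counterexample reduction. This misreads the architecture of the paper. Part (v) is, like (i)--(iv), simply quoted from \cite{Sambalemna}; the paper's own appeal to the classification occurs later, in the Proposition showing that no quasisimple group has a \emph{block} with defect group $D$ --- a different and considerably harder statement than the Sylow assertion in (v). Indeed, the paper repeatedly invokes Lemma~\ref{Omnibus}(v) \emph{inside} the proof of that Proposition (to rule out defining characteristic, alternating groups, and so on), so (v) must be available in advance and cannot itself be the locus of the classification argument. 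Your own parenthetical alternative, ``one simply cites the corresponding assertion in \cite{Sambalemna}'', is exactly what the paper does and is all that is required here; the long programme you outline would duplicate work already absorbed into \cite{Sambalemna} and conflates this preparatory lemma with the paper's main new content.
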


In our proof of Theorem~\ref{main}, the following result will be very useful.

\begin{Lemma}\label{ExNilBlock}
Let $G$, $B$, $D$ be as in Theorem~\ref{main}. Moreover, let $b$ be a $2$-block of a normal subgroup $H$ of $G$ which is covered by $B$. If a defect group $d$ of $b$ satisfies $|d|<|D|$, then $b$ is nilpotent.
\end{Lemma}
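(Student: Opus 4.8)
The plan is to combine Knörr's theorem on the defect groups of blocks of normal subgroups with the structural information collected in Lemma~\ref{Omnibus}. By Knörr's theorem we may choose the defect group $D$ of $B$, and the maximal $B$-subpair $(D,b_D)$, so that $d=D\cap H$; as $|d|<|D|$ and $D$ is minimal nonabelian, $d$ is a proper, hence abelian, normal subgroup of $D$. Now $\N_G(D,b_D)$ normalizes $D$ and $H$, hence also $d$. Since $B$ is nonnilpotent, Lemma~\ref{Omnibus}(ii) gives $\lvert\N_G(D,b_D):D\C_G(D)\rvert=3$; picking $\sigma\in\N_G(D,b_D)$ of order $3$ mapping onto a generator of this quotient, $\sigma$ cannot centralize $D/\Phi(D)\cong C_2\times C_2$ by coprime action, so it permutes the three maximal subgroups of $D$ in a single $3$-cycle. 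Therefore the $\sigma$-invariant subgroup $d\Phi(D)/\Phi(D)$ of $D/\Phi(D)$ is trivial or all of $D/\Phi(D)$; the latter would force $d=D$, so $d\le\Phi(D)=\Z(D)$.

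If $d\le D'=\langle z\rangle$, then $|d|\le 2$, so $b$ has defect at most $1$ and is nilpotent because $\Aut(d)=1$. So assume $d\not\le D'$; then by Lemma~\ref{Omnibus}(ii) $\sigma$ acts nontrivially on $d$, and by Lemma~\ref{Omnibus}(iv) (applied with $Q=d$) there is a $B$-subpair $(d,\beta)$, which we may take with $(d,\beta)\le(D,b_D)$, where $\beta$ is a nilpotent block of $\C_G(d)$. Since $d\le\Z(D)$ we have $\C_D(d)=D$, so $D$ is contained in, hence equal to, a defect group of $\beta$. Choosing the maximal $b$-subpair $(d,b_d)$ compatibly, one checks that $\beta$ covers the block $b_d$ of $C_H(d)=\C_G(d)\cap H\trianglelefteq\C_G(d)$; nilpotency of $\beta$ then forces $b_d$ to be nilpotent with defect group $D\cap C_H(d)=d\le\Z(C_H(d))$. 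Hence it suffices to prove that the inertial quotient $E(b)=\N_H(d,b_d)/C_H(d)$ — a $2'$-group — is trivial, for then, $d$ being abelian, $b$ is nilpotent.

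To handle $E(b)$ I would first apply Fong--Reynolds to reduce to the case that $b$ is $G$-stable, which affects neither the defect group $D$ nor the nilpotency of $B$ or of $b$. The fusion system $\mathcal F_b$ on $d$ is a subsystem of $\mathcal F_B$, and $\Aut_{\mathcal F_B}(d)$ is induced by $\N_G(D,b_D)$, which acts on $d$ through $\N_G(D,b_D)/D\C_G(D)\cong C_3$; hence $E(b)=\Aut_{\mathcal F_b}(d)\le\Aut_{\mathcal F_B}(d)$ has order $1$ or $3$, and everything comes down to excluding the case $E(b)\cong C_3$.

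This last exclusion is where I expect the main difficulty to lie, and the difficulty is genuine: the order-$3$ ``twist'' of $B$ sits in $\N_G(D,b_D)$ but need not lie in $H$, so it does not by itself prevent $b$ from being nilpotent, and indeed $b$ can be nilpotent while $\Aut_{\mathcal F_B}(d)\cong C_3$. Suppose $E(b)\cong C_3$ and pick $h\in\N_H(d,b_d)$ of $3$-power order inducing a generator of $E(b)$; after replacing $h$ by $h^2$ if necessary, $h$ and $\sigma$ induce the same automorphism of $d$, so $g_0:=\sigma^{-1}h\in\C_G(d)$. Since $\sigma$ fixes the subpair $(d,\beta)$ and $g_0\in\C_G(d)$ fixes the block $\beta$, it follows that $h$ fixes $(d,\beta)$; thus $h$ normalizes $\C_G(d)$, fixes $\beta$, and induces an automorphism of order $3$ on $d\le\Z(D)$. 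The plan is then to play this configuration against the nilpotency of $\beta$ — for instance by passing to the Brauer correspondent $b^{*}$ of $b$ in $N_H(d)$, a block with normal abelian defect group $d$ and inertial quotient $E(b)$, and comparing it inside $N_G(d)$ with a block of $\C_G(d)\,N_H(d)$ lying over $\beta$ — to force $E(b)=1$, a contradiction. Carrying out this comparison, keeping careful track of the compatibility of the $b$-, $b_d$-, $\beta$- and $B$-subpairs under $N_G(d)$, is the part of the argument that requires the most work.
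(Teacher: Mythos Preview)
Your opening reduction---taking $d=D\cap H$ and using the transitive action on the maximal subgroups of $D$ to force $d\le\Phi(D)$---is exactly how the paper begins. But from that point on you take a much harder road, and the argument as stated is incomplete: you yourself flag the exclusion of $E(b)\cong C_3$ as unfinished, and the claimed containment $\Aut_{\mathcal F_b}(d)\le\Aut_{\mathcal F_B}(d)$ needs more justification than you give it (fusion in $b$ is computed inside $H$, not inside $G$, and compatibility of subpairs does not automatically yield this inclusion at the level of automizers of $d$).

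The key idea you are missing is that one should work with the normalizer of $D$, not of $d$. Since $d=D\cap H\unlhd\N_G(D)$, every element of $\N_H(D)$ satisfies $[\N_H(D),D]\subseteq D\cap H=d\subseteq\Phi(D)$, so $\N_H(D)$ acts trivially on $D/\Phi(D)$ and hence $\N_H(D)/\C_H(D)$ is a $2$-group. Now pass to the intermediate group $DH$: the unique block $\beta$ of $DH$ covering $b$ has defect group $D$, and the computation just made forces $t(\beta)=1$. Since $\beta$ is controlled (Lemma~\ref{Omnibus}(i)), it is nilpotent; then Proposition~6.5 of K\"ulshammer--Puig on extensions of nilpotent blocks gives that $b$ is nilpotent. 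This bypasses entirely the delicate subpair comparison you were setting up at the level of $\C_G(d)$.
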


\begin{proof}
It is well-known that $d$ is conjugate to $D\cap H$ (cf. Theorem~E in \cite{Ksolvable}). Replacing $D$ by a conjugate, if necessary, we may assume that $d=D\cap H$. If $d<D$ then also $d\Phi(D)<D$. By Lemma \ref{Omnibus} $B$ has inertial index $t(B)=3$. Since $|D:\Phi(D)|=4$, this implies that $\N_G(D)$ permutes the three maximal subgroups of $D$ transitively. Since $d\Phi(D)$ is normal in $\N_G(D)$, we must have $|D:d\Phi(D)|\ge 4$. But then $d\subseteq\Phi(D)$, and $[\N_H(D),D]\subseteq D\cap H=d\subseteq\Phi(D)$. Thus, $\N_H(D)$ acts trivially on $D/\Phi(D)$. Hence, $\N_H(D)/\C_H(D)$ is a $2$-group. Let $\beta$ be the unique $2$-block of $DH$ covering $b$. Then $D$ is a defect group of $\beta$, by Theorem~E in \cite{Ksolvable}. Let $\beta_D$ be a $2$-block of $D\C_{DH}(D)$ such that $(\beta_D)^{DH}=\beta$. Then $\N_H(D,\beta_D)/\C_H(D)$ and $\N_{DH}(D,\beta_D)/\C_{DH}(D)$ are also $2$-groups, i.\,e. $\beta$ has inertial index $t(\beta)=1$. Since $\beta$ is a controlled block, by Lemma \ref{Omnibus} this implies that $\beta$ is a nilpotent block. But now Proposition~6.5 in \cite{exnilblocks} shows that $b$ is also nilpotent.
\end{proof}

\begin{Corollary}
\label{2PowerIndex}
Let $G$, $B$, $D$ be as in Theorem~\ref{main}. If $H \lhd G$ has index a power of $2$, then $D \leq H$.
\end{Corollary}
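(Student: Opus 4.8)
The plan is to argue by contradiction: assuming $D\not\leq H$, I will deduce that $B$ is nilpotent, contrary to the hypothesis of Theorem~\ref{main}.

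If $D\not\leq H$, then $D\cap H<D$. Let $b$ be a block of $H$ covered by $B$. By Theorem~E of \cite{Ksolvable} a defect group of $b$ is $G$-conjugate to $D\cap H$, and so has order strictly smaller than $|D|$; hence Lemma~\ref{ExNilBlock} shows that $b$ is nilpotent.

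Next I would pass to the stabiliser $T$ of $b$ in $G$. Then $H\leq T$, so $[T:H]$ divides $[G:H]$ and is a power of $2$, and $b$ is $T$-stable. By the Fong--Reynolds correspondence the block $B_T$ of $T$ covering $b$ has defect group $D$ and is source-algebra equivalent to $B$, so $B_T$ is again nonnilpotent. Now $b$ is a $T$-stable nilpotent block of the normal subgroup $H$ of $T$ with $[T:H]$ a power of $2$, so I would invoke the Külshammer--Puig theory of extensions of nilpotent blocks (\cite{exnilblocks}): the finite group governing the source algebra of $B_T$ is built from $T/H$ and a defect group of $b$, hence is a $2$-group, and since $\mathbb{F}$ has characteristic $2$ the accompanying twist is trivial. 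Therefore $B_T$ has the source algebra of a nilpotent block, so $B_T$ --- and with it $B$ --- is nilpotent, which is the desired contradiction.

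The hard part is this last step: one needs the extension theory of nilpotent blocks to see that a nonnilpotent block with defect group as in \eqref{defD} cannot cover a nilpotent block of a normal subgroup of $2$-power index. The Fong--Reynolds reduction is only there to reach the $T$-stable situation where that theory applies; granting Lemma~\ref{ExNilBlock}, the corollary is essentially equivalent to this implication, and the remaining bookkeeping is routine.
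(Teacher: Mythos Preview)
Your proof is correct and follows essentially the same approach as the paper: use Lemma~\ref{ExNilBlock} to see that the covered block $b$ is nilpotent, then invoke K\"ulshammer--Puig to conclude that $B$ is nilpotent, contradicting the hypothesis. The paper's version is more economical: it simply cites \cite[6.5]{exnilblocks} directly (which already covers the case of a normal subgroup of $p$-power index without any stability assumption), so your Fong--Reynolds reduction to the stabiliser and the explicit source-algebra/twist analysis are unnecessary detours.
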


\begin{proof}
There is a block $b$ of $H$ covered by $B$ with defect group $D \cap H$. If $D \not\leq H$, then by Lemma \ref{ExNilBlock} $b$ is nilpotent. But then by \cite[6.5]{exnilblocks} $B$ is nilpotent, a contradiction.
\end{proof}

We will apply Lemma~\ref{ExNilBlock} in connection with the results in \cite{exnilblocks}. We are almost in a position to start our proof of Theorem~\ref{main}. First we prove a general result which is presumably well-known, but whose proof we sketch for the convenience of the reader.

\begin{Lemma}
\label{centralproduct}
Let $G=G_1 * G_2$ be a central product of finite groups $G_1$ and $G_2$ and let $B$ be a block of $G$. Let $B_i$ be the (unique) block of $G_i \lhd G$ covered by $B$. Then $B$ is nilpotent if and only if both $B_1$ and $B_2$ are.
\end{Lemma}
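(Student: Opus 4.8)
The plan is to transfer the question to the direct product $\tilde G:=G_1\times G_2$, which maps onto $G$ via the surjection $\theta\colon(g_1,g_2)\mapsto g_1g_2$ with central kernel $L\cong G_1\cap G_2$. First I would note that $\operatorname{Irr}(B)$, viewed inside $\operatorname{Irr}(\tilde G/L)\subseteq\operatorname{Irr}(\tilde G)$, lies in a single block $\tilde B$ of $\tilde G$: this is the elementary part of the theory of \emph{dominated blocks}, obtained by comparing central characters of $\tilde G$ and $\tilde G/L$. By the standard description of blocks of a direct product we have $\tilde B=\tilde B_1\otimes\tilde B_2$ with $\tilde B_i$ a block of $G_i$; and since the restriction of a character $\chi\in\operatorname{Irr}(B)$ to $G_i$ does not depend on whether $\chi$ is viewed on $G$ or on $\tilde G$, the block $\tilde B_i$ is exactly the block $B_i$ of $G_i$ covered by $B$.

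I would then split the claim into two parts. The first is that $B$ is nilpotent if and only if $\tilde B$ is. Recall (Brou\'e--Puig) that a block is nilpotent precisely when its fusion system on a maximal subpair coincides with the fusion system of its defect group, equivalently when $\N_H(Q,b_Q)/\C_H(Q)$ is a $p$-group for every subpair $(Q,b_Q)$. Since $B$ is dominated by $\tilde B$ with $L$ central, a defect group $D$ of $\tilde B$ contains the central $p$-subgroup $L_p:=L\cap D$, $D/L_p$ is a defect group of $B$, and the fusion system of $B$ is the quotient of the fusion system of $\tilde B$ by $L_p$; triviality of one fusion system is equivalent to triviality of the other, because the kernel of the map $\Aut_{\mathcal F}(Q)\to\Aut_{\mathcal F/L_p}(Q/L_p)$ is always a $p$-group. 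The second part is that $\tilde B=B_1\otimes B_2$ is nilpotent if and only if both $B_1$ and $B_2$ are: here a maximal $\tilde B$-subpair has the form $(D_1\times D_2,\,b_1\otimes b_2)$ for maximal $B_i$-subpairs $(D_i,b_i)$, one has $\C_{\tilde G}(Q)=\C_{G_1}(\pi_1(Q))\times\C_{G_2}(\pi_2(Q))$ for the coordinate projections $\pi_i\colon\tilde G\to G_i$, and consequently the fusion system of $\tilde B$ is the direct product of the fusion systems of $B_1$ and $B_2$, which is trivial if and only if each factor is. Combining the two parts yields the lemma.

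I expect the main obstacle to be the bookkeeping in the first part: correctly matching defect groups and subpairs of $B$ and $\tilde B$ across the central quotient $\theta$, so that the comparison of fusion systems is really a quotient by the central $p$-subgroup $L_p$. When $L$ is a $p'$-group this is immediate, since then $G=\tilde G/L$ and $\tilde G$ have identical $p$-local structure; in the general case one appeals to the theory of dominated blocks, i.e.\ the behaviour of defect groups and Brauer pairs under central quotients. The remaining ingredients---the local structure of a block of a direct product, and the Brou\'e--Puig characterisation of nilpotency via Brauer categories---are standard.
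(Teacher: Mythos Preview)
Your approach is correct and essentially identical to the paper's: both pass to the direct product $\tilde G=G_1\times G_2$ via the dominated block $\tilde B$, argue that nilpotency is preserved under the central quotient (the paper simply cites \cite[2.6]{ae11odd} for this step, whereas you sketch the fusion-system comparison), and then handle the direct product case using the product decomposition of centralisers and normalisers. The only difference is that you spell out the Brou\'e--Puig argument where the paper quotes a reference.
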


\begin{proof}
We may write $G=E/Z$, where $E=G_1 \times G_2$ and $Z \leq \Z(E)$. Let $B_E$ be the unique block of $E$ dominating $B$, so $\pcore_{p'}(Z)$ is in the kernel of $B_E$ and $B_E$ has defect group $D_E$ such that $D_EZ/Z$ is a defect group for $B$. By~\cite[2.6]{ae11odd} $B_E$ is nilpotent if and only if $B$ is. Note that $B_E$ is a product of blocks of $G_1$ and $G_2$ which are nilpotent if and only if $B_1$ and $B_2$ are. Hence it suffices to consider the case $G=G_1 \times G_2$. However, the result follows easily in this case since the normalizer and centralizer of a subgroup $Q$ of $G_1 \times G_2$ are $\N_{G_1}(\pi_1(Q)) \times  \N_{G_2}(\pi_2(Q))$ and $\C_{G_1}(\pi_1(Q)) \times  \C_{G_2}(\pi_2(Q))$, where $\pi_i(Q)$ is the image of the projection onto $G_i$ (we leave the details to the reader).
\end{proof}

\begin{proof}[Proof (of Theorem~\ref{main}).]
We assume that Theorem~\ref{main} fails, and choose a counterexample $G$, $B$, $D$ such that $|G:\Z(G)|$ is as small as possible. Moreover, among all such counterexamples, we choose one where $|G|$ is minimal. Then, by the first Fong reduction, the block $B$ is quasiprimitive, i.\,e. for every normal subgroup $N$ of $G$, there is a unique block of $N$ covered by $B$; in particular, this block of $N$ is $G$-stable. Moreover, by the second Fong reduction $\pcore_{2'}(G)$ is cyclic and central.

We claim that $Q:=\pcore_2(G)\subseteq D'$. Since $Q\unlhd G$ we certainly have $Q\subseteq D$. If $Q=D$ then $B$ has a normal defect group, and $B$ is Morita equivalent to $\mathcal{O}[D\rtimes E]$, by the main result of \cite{Kuelshammer}. Thus, we may assume that $1<Q<D$; in particular, $Q$ is abelian. Let $B_Q$ be a block of $Q\C_G(Q)=\C_G(Q)$ such that $(B_Q)^G=B$. Since $\C_G(Q)\unlhd G$, the block $B_Q$ has defect group $\C_D(Q)$, and either $\C_D(Q)=D$ or $|D:\C_D(Q)|=2$. Since $B$ has inertial index $t(B)=3$, $\N_G(D)$ permutes the maximal subgroups of $D$ transitively. Since $\C_D(Q)\unlhd\N_G(D)$, we must have $\C_D(Q)=D$, i\,e. $Q\subseteq\Z(D)$.

Thus, $B_Q$ is a $2$-block of $\C_G(Q)$ with defect group $D$. If $Q\nsubseteq D'$ then $B_Q$ is nilpotent, by Lemma \ref{Omnibus}. Then, by the main result of \cite{exnilblocks}, $B$ is Morita equivalent to a block of $\N_G(D)$ with defect group $D$, and we are done by the main result of \cite{Kuelshammer}.

This shows that we have indeed $\pcore_2(G)\subseteq D'$; in particular, $\lvert\pcore_2(G)\rvert\le 2$ and thus $\pcore_2(G)\subseteq\Z(G)$. Hence, also $\F(G)=\Z(G)$.

Let $b$ be a block of $\E(G)$ covered by $B$. If $b$ is nilpotent, then, by the main result of \cite{exnilblocks}, $B$ is Morita equivalent to a $2$-block $\widetilde{B}$ of a finite group $\widetilde{G}$ having a nilpotent normal subgroup $\widetilde{N}$ such that $\widetilde{G}/\widetilde{N}\cong G/\E(G)$, and the defect groups of $\widetilde{B}$ are isomorphic to $D$. Thus, by minimality, we must have $\E(G)=1$. Then $\F^*(G)=\F(G)=\Z(G)$, and $G=\C_G(\Z(G))=\C_G(\F^*(G))=\Z(\F^*(G))=\Z(G)$, a contradiction.

Thus, $b$ is not nilpotent. By Lemma~\ref{ExNilBlock}, $b$ has defect group $D$. Let $L_1,\ldots,L_n$ be the components of $G$ and, for $i=1,\ldots,n$, let $b_i$ be a block of $L_i$ covered by $b$. If $b_1,\ldots,b_n$ were nilpotent, then $b$ would also be nilpotent by Lemma \ref{centralproduct}, a contradiction. Thus, we may assume that $b_1$ is a nonnilpotent $2$-block (of the quasisimple group $L_1$). By Lemma~\ref{ExNilBlock}, $D$ is a defect group of $b_1$. But now the following proposition gives a contradiction.
\end{proof}

\begin{Proposition}
Let $D$ be a $2$-group as in $\eqref{defD}$, and let $G$ be a quasisimple group. Then $G$ does not have a $2$-block $B$ with defect group $D$.
\end{Proposition}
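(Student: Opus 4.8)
The strategy is to invoke the classification of finite simple groups and work through the possible quasisimple groups $G$ according to the type of $G/\Z(G)$, ruling out in each case the existence of a $2$-block with defect group $D$ as in \eqref{defD}. The key structural features of $D$ to exploit are: $D$ is minimal nonabelian of order $2^{2r+1}\ge 32$ with $|D'|=2$, $D'=\Phi(D)\cap\Z(D)$ has order $2$, $\Z(D)=\langle x^2,y^2,z\rangle$ has order $2^{2r-1}$, and $D/D'\cong C_{2^r}\times C_{2^r}$; moreover, by Lemma~\ref{Omnibus}(ii) a nonnilpotent block with this defect group has inertial index $3$, so the relevant automorphism of $D$ has order $3$ and acts fixed-point-freely on $D/\langle z\rangle$. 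Since $r\ge 2$, the centre $\Z(D)$ contains elements of order $\ge 4$; this will be crucial for bounding the $2$-part of $|G|$.

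\textbf{Main steps.}
First I would reduce modulo the centre: since $\Z(G)$ is a $2$-group intersected with a cyclic group, and by Lemma~\ref{Omnibus}(iii) the relevant central $2$-subgroup is cyclic of order at most that of $\langle z\rangle$, one controls the possible Schur multipliers. Then I would run through the list of quasisimple groups $G$:
\begin{enumerate}[(1)]
\item Groups of Lie type in characteristic $2$: here a $2$-block with nontrivial defect group must be of positive defect, and by the work of Cabanes--Enguehard (or an appeal to the fact that defect groups of non-principal blocks in defining characteristic are trivial, while the principal block has a Sylow $2$-subgroup as defect group) one is reduced to $D\in\Syl_2(G)$. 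But a Sylow $2$-subgroup of a group of Lie type in characteristic $2$ of rank $\ge 2$ is far from minimal nonabelian (it has large derived subgroup), and for rank $1$ (i.e. $\PSL_2(2^f)$, $\PSU_3(2^f)$, ${}^2B_2(2^f)$) the Sylow $2$-subgroups are either elementary abelian or have the wrong structure; moreover, by Lemma~\ref{Omnibus}(v) a group with $D\in\Syl_2(G)$ would be solvable, contradicting $G$ quasisimple.
\item Alternating groups $\mathfrak{A}_n$ and their double covers: the $2$-blocks of symmetric and alternating groups have defect groups that are direct/iterated wreath products of cyclic groups of order $2$; such groups are minimal nonabelian only in very small cases ($D_8$, order $8$), which is excluded since $|D|\ge 32$. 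The double covers are handled similarly using the spin block theory.
\item Sporadic groups and their covers: a finite check against the known block distributions (e.g. using the tables in the Atlas and the GAP character table library) shows no sporadic quasisimple group has a $2$-block whose defect group is minimal nonabelian of order $\ge 32$ with inertial index $3$.
\item Groups of Lie type in odd characteristic: this is the substantial case. Here I would use the theory of $2$-blocks of finite reductive groups (Cabanes--Enguehard, Brou\'e--Michel, the description of defect groups via $e$-cores and Sylow $d$-tori for $d$ the order of $q$ modulo $4$). The point is that defect groups of $2$-blocks of such groups, when nonabelian, tend to contain large abelian normal subgroups of the wrong shape, or are themselves Sylow, and a Sylow $2$-subgroup of a group of Lie type in odd characteristic is a well-understood extension (often with a nontrivial action of a Weyl-group $2$-element) that is not minimal nonabelian for rank $\ge 1$ once the order exceeds $8$; for the low-rank cases one matches the explicit Sylow $2$-subgroups (of $\PSL_2(q)$, $\PSL_3(q)$, $\PSU_3(q)$, $\mathrm{PSp}_4(q)$, ${}^2G_2(q)$, etc.) against the list in \eqref{defD}.
\end{enumerate}
For each family I would also use the constraint that the block is nonnilpotent with inertial index exactly $3$: in many cases even if a minimal nonabelian $2$-group appeared as a defect group, the only block with such a defect group would be nilpotent, or would have inertial index $1$ (as in the argument of Lemma~\ref{ExNilBlock}), which already gives a contradiction.

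\textbf{Main obstacle.}
The hard part will be the groups of Lie type in odd characteristic, both because there are infinitely many of them and because the combinatorics of their $2$-blocks is intricate. The key simplifying observation I would lean on is that $D/D'\cong C_{2^r}\times C_{2^r}$ is homocyclic of rank $2$ with $r\ge 2$, so $D$ has exactly three maximal subgroups permuted cyclically by $\N_G(D)$; this rank-$2$ homocyclic quotient is a very restrictive condition on Sylow $2$-subgroups of groups of Lie type, essentially forcing (twisted) rank $\le 2$ and ruling out all but a short list of small cases, which can then be checked by hand or by explicit computation. One must also be careful with the central extensions: a block of $G$ with defect group $D$ projects to a block of $G/\Z(G)$ with defect group $D/\Z(G)\cap D$, and conversely defect groups upstairs are extensions of defect groups downstairs by the relevant central $2$-subgroup, so the analysis must track the full list of quasisimple groups, not merely the simple ones. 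Finally, one should double-check the genuinely small defect cases (order $32$, $128$) by direct reference to the decomposition-matrix literature, since these are exactly the cases most likely to produce an unexpected example.
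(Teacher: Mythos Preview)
Your overall architecture---run through the classification and eliminate each family---is exactly what the paper does, and your treatment of the alternating groups, the sporadics, and the groups of Lie type in characteristic~$2$ matches the paper's (the latter via Humphreys plus Lemma~\ref{Omnibus}(v)). The divergence is entirely in the groups of Lie type in odd characteristic, and there your plan is too vague to succeed as stated.

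The paper's argument for odd characteristic has three distinct ingredients, none of which your proposal captures. First, for types $A_n$ and $^2A_n$ it passes to $H=\GL_n(q)$ or $\GU_n(q)$, uses the semisimple label $s$ of the covering block so that $D_H\in\Syl_2(\C_H(s))$, invokes Lemma~\ref{Omnibus}(v) to force $\C_H(s)$ solvable, and then does a delicate Frattini-quotient comparison between $D$, $D_E$, and $D_H$ (where $D_H$ is a product of cyclic groups and copies of $SD_{16}$, only possible when $q=3$) to reach a contradiction. Second, for the remaining classical groups with $q>3$ it uses Enguehard's description of defect groups as Sylow $2$-subgroups of duals of connected centralizers, together with Carter's structure theorem for semisimple centralizers, to show the relevant group is either abelian or nonsolvable. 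Third---and this is the main idea you are missing---for exceptional types and for classical types at $q=3$ it sets up an induction on $|G|$ among quasisimple groups and analyzes $\C_G(z)$ when $|\Z(G)|$ is odd (finding a strictly smaller quasisimple component carrying a nonnilpotent block with the same defect group $D$), and $\C_G(u)$ for an involution $u\in\Z(D)\setminus D'$ when $|\Z(G)|$ is even (exploiting Lemma~\ref{Omnibus}(iv) to get a nilpotent subpair and then An--Eaton to force an abelian defect group upstairs). The tables in \cite[4.5.1, 4.5.2]{classification3} are used systematically here.

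Your alternative suggestion---that the homocyclic rank-$2$ quotient $D/D'$ ``forces twisted rank $\le 2$''---does not work as stated: the defect group of a $2$-block need not be a Sylow $2$-subgroup of $G$, so constraints on $D$ do not directly bound the Lie rank of $G$. Without the semisimple-label reduction (which \emph{does} turn $D$ into a Sylow subgroup of a centralizer) or the involution-centralizer induction, there is no mechanism in your outline to cut the problem down to a finite check. This is the genuine gap.
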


Note that the proposition holds for classical groups by~\cite{AnEatonExtraspecial}, where blocks whose defect groups have derived subgroup of prime order are classified. However, since our situation is less general we give new arguments here.

\begin{proof}
We assume the contrary. Then we may also assume that $B$ is faithful. Note that by~\cite{AnEaton} $B$ cannot be nilpotent since $D$ is nonabelian. By Lemma~\ref{Omnibus}, $D$ is not a Sylow $2$-subgroup of $G$, in particular, $64=2^6$ divides $|G|$.

Suppose first that $\overline{G}:=G/\Z(G)\cong A_n$ for some $n\ge 5$. If $\lvert\Z(G)\rvert>2$, then $n\in\{6,7\}$ and $\lvert\Z(G)\rvert\mid 6$, by \cite{classification3}. But then $|G|$ is not divisible by $64$, a contradiction. Thus, we must have $\lvert\Z(G)\rvert\le 2$. Then $\Z(G)\subseteq D$, and $B$ dominates a unique $2$-block $\overline{B}$ of $\overline{G}$ with defect group $\overline{D}:=D/\Z(G)\ne 1$. Let $\mathcal{B}$ be a $2$-block of $S_n$ covering $\overline{B}$. Then $\mathcal{B}$ has a defect group $\mathcal{D}$ such that $\overline{D}\subseteq\mathcal{D}$ and $|\mathcal{D}:\overline{D}|=2$, by results in \cite{JamesKerber}. Let $w$ denote the weight of $\mathcal{B}$. Then, by a result in \cite{JamesKerber}, $\mathcal{D}$ is conjugate to a Sylow $2$-subgroup of $S_{2w}$. We may assume that $\mathcal{D}$ is a Sylow $2$-subgroup of $S_{2w}$. Then $\overline{D}=\mathcal{D}\cap A_n=\mathcal{D}\cap S_{2w}\cap A_n=\mathcal{D}\cap A_{2w}$ is a Sylow $2$-subgroup of $A_{2w}$, and $D$ is a Sylow $2$-subgroup of $A_{2w}$ or $2.A_{2w}$. Thus, $A_{2w}$ is solvable by Lemma~\ref{Omnibus}, so that $w\le 2$ and $|\overline{D}|\le 4$, $|D|\le 8$. Since $|D|\ge 32$, this is a contradiction.

Suppose next that $\overline{G}$ is a sporadic simple group. Then, using Table~1 in \cite{AnEaton}, we get a contradiction immediately unless $G=\text{Ly}$ and $|D|=2^7$. In this remaining case, we get a contradiction since, by \cite{Landrocksporadic}, $D$ is a Sylow $2$-subgroup of $2.A_8$, and $A_8$ is nonsolvable.

Now suppose that $G$ is a group of Lie type in characteristic $2$. Then, by a result of Humphreys \cite{Humphreys}, the $2$-blocks of $G$ have either defect zero or full defect. Thus, again Lemma \ref{Omnibus} leads to a contradiction.

It remains to deal with the groups of Lie type in odd characteristic. We use three strategies to deal with the various subcases.

Suppose first that $\overline{G}\cong\PSL_n(q)$ or $\PSU_n(q)$ where $1<n\in\mathbb{N}$ and $q$ is odd. Except in the cases $\PSL_2(9)$ and $\PSU_4(3)$, there is $E \cong \SL_n(q)$ or $\SU_n(q)$ such that $G$ is a homomorphic image of $E$ with kernel $W$ say. We may rule out the cases $G/\Z(G) \cong \PSL_2(9)$ or $\PSU_4(3)$ using~\cite{gap}. Let $H \cong \GL_n(q)$ or $\GU_n(q)$ with $E \lhd H$. There is a block $B_E$ of $E$ with defect group $D_E$ such that $D_E/W \cong D$. Let $B_H$ be a block of $H$ covering $B_E$ with defect group $D_H$ such that $D_H \cap E = D_E$. Now $B_H$ is labeled by a semisimple element $s \in H$ of odd order such that $D_H \in \Syl_2(\C_H(s))$ (see, for example,~\cite[3.6]{BroueGL}). It follows that $D \in \Syl_2(\C_E(s)/W)$ and so $\C_E(s)/W$ is solvable by Lemma~\ref{Omnibus}. Now $W$ and $H/E$ are solvable, so $\C_H(s)$ is also solvable. By~\cite[1A]{FongGL} $\C_H(s)$ is a direct product of groups of the form $\GL_{n_i}(q^{m_i})$ and $\GU_{n_i}(q^{m_i})$. Write $\C_H(s)\cong \bigl( \prod_{i=1}^{t_1}{\GL_{n_i}(q^{m_i})}\bigr)\times \bigl( \prod_{i=t_1+1}^{t_2}{\GU_{n_i}(q^{m_i})}\bigr)$ where $t_1, t_2 \in\mathbb{N}$, $n_1,\ldots,n_{t_2}\in\mathbb{N}$, and $m_1,\ldots,m_{t_2}\in\mathbb{N}$, with $n_i \geq 3$ for $i > t_1$. Solvability implies that $t_2=t_1$ and that for $i=1,\ldots,t_1$ we have either $n_i=1$ or $n_i=2$, where in the latter case $m_i=1$ and $q=3$. Since $D$, $D_E$, and $D_H$ are nonabelian, we cannot have $n_i=1$ for all $i=1,\ldots,t_1$. Thus, we must have $q=3$ and, w.\,l.\,o.\,g., $n_1=2$, $m_1=1$. Then $D_H$ is a direct product of factors which are either cyclic or isomorphic to $SD_{16}$. Moreover, we have $|D_H:D_E|\le 2$ and $|W|\le 2$. Since $|D:\Phi(D)|=4$, we also have $|D_E:\Phi(D_E)|\le 8$ and $|D_H:\Phi(D_H)|\le 16$.

Suppose first that $|D_H:\Phi(D_H)|=16$. Then $|D_E:\Phi(D_E)|=8$, $|D_H:D_E|=2$, and $|W|=2$. Since $W\nsubseteq\Phi(D_E)$, $D_E\cong D\times W$. If $D_H\cong SD_{16}\times SD_{16}$, then $|D_H|=2^8$ and $|D|=2^6$ which is impossible.

Thus, we must have $D_H\cong SD_{16}\times C_k\times C_l$ where $k$ and $l$ are powers of $2$. Observe that $\Phi(D_E)\subseteq\Phi(D_H)$ and $|D_H:\Phi(D_H)|=16=|D_H:\Phi(D_E)|$. So we must have $\Phi(D_E)=\Phi(D_H)$. Since $\Phi(D_E)\cong\Phi(D)\cong C_{2^{r-1}}\times C_{2^{r-1}}\times C_2$ and $\Phi(D_H)\cong C_4\times C_{k/2}\times C_{l/2}$, this implies that $4=2^{r-1}$, i.\,e. $r=3$ and $\Phi(D)\cong\Phi(D_E)\cong C_4\times C_4\times C_2$. So we may assume that $k=8$, $l=4$. Thus, $D_E\cong D\times C_2$ and $D_H\cong SD_{16}\times C_8\times C_4$. Hence, $D_E'=D'\times 1$, $|D_E'|=2$ and $D_E'\subseteq D_H'\cap\Z(D_H)\cong\Z(SD_{16})\times 1\times 1$, so that $D_E'=\Z(SD_{16})\times 1\times 1$. Moreover, $D_E/D_E'\cong C_8\times C_8\times C_2$ is a subgroup of $D_H/D_E'\cong D_8\times C_8\times C_4$. Hence $\mho_2(C_8\times C_8\times C_2)\cong C_2\times C_2$ is isomorphic to a subgroup of $\mho_2(D_8\times C_8\times C_4)\cong C_2$ which is impossible.

Next we consider the case $|D_H:\Phi(D_H)|=8$. In this case we have $D_H\cong SD_{16}\times C_k$ where $k$ is a power of $2$. Then $\Phi(D_E)\subseteq\Phi(D_H)\cong C_4\times C_{k/2}$ and $\Phi(D)\cong\Phi(D_EW/W)=\Phi(D_E)W/W$. However, this contradicts $\Phi(D)\cong C_{2^{r-1}}\times C_{2^{r-1}}\times C_2$.

The case $|D_H:\Phi(D_H)|\le 4$ is certainly impossible.

Let $G$ be a quasisimple finite group of Lie type with $|G|$ minimised such that there is a block $B$ of $G$ with defect group $D$ as in $\eqref{defD}$. We have just shown that $G$ cannot be of type $A_n(q)$ or $^2A_n(q)$. Details of the following may be found in~\cite{carter} and~\cite{ce}. We may realize $G$ as $\bG^F$, where $\bG$ is a simple, simply-connected group of Lie type defined over the algebraic closure of a finite field, $F:\bG \rightarrow  \bG$ is a Frobenius map and $\bG^F$ is the group of fixed points under $F$. Write $\bG^*$ for the group dual to $\bG$, with corresponding Frobenius map $F^*$. Note that if $\bH$ is an $F$-stable connected reductive subgroup of $\bG$, then $\bH$ has dual $\bH^*$ satisfying $|\bH^F|=|(\bH^*)^{F^*}|$.

Suppose that $G$ is a classical quasisimple group of type $B_n(q)$, $C_n(q)$, $D_n(q)$ or $^2D_n(q)$, where $q>3$ is a power of an odd prime.

By~\cite[1.5]{en08} there is a semisimple element $s \in \bG^*$ of odd order such that $D$ is a Sylow $2$-subgroup of $\bL^F$, where $\bL \leq \bG$ is dual to $\C_{\bG^*}^0(s)$, the connected component of $\C_{\bG^*}(s)$ containing the identity element. By Lemma \ref{Omnibus} $\bL^F$ is solvable. Now by~\cite{ca81} $\C_{\bG^*}(s)$ factorizes as $\bM \bT$, where $\bT$ is a torus and $\bM$ is semisimple, $\C_{(\bG^*)^{F^*}}(s)=\C_{\bG^*}(s)^F = \bM^F\bT^F$ and the components of $\bM^F$ are classical groups defined over fields of order a power of $q$. Hence $\C_{(\bG^*)^{F^*}}(s)$ is either abelian or non-solvable. It follows that $\bL^F$ is either abelian or non-solvable, in either case a contradiction.

{\bf Case 1.} Suppose that $G$ is a quasisimple finite group of Lie type with center of odd order, and further that $q=3$ if $G$ is classical. We analyze $\C_G(z)$, where we recall that $D'=\langle z \rangle$. There is a non-nilpotent block $b_z$ of $\C_G(z)$ with defect group $D$. As $s$ is semisimple, $\C_G(z)$ may be described in detail. By~\cite[4.2.2]{classification3} $\C_G(z)$ has a normal subgroup $C^0$ such that $\C_G(z)/C^0$ is an elementary abelian $2$-group and $C^0 = LT$, where $L=L_1 * \cdots * L_s \lhd C^0$ is a central product of quasisimple groups of Lie type and $T$ is an abelian group acting on each $L_i$ by inner-diagonal automorphisms.

If $G$ is a classical group or any exceptional group of Lie type except $E_6(q)$, $^2E_6(q)$ or $E_7(q)$, then by ~\cite[4.5.1]{classification3} and ~\cite[4.5.2]{classification3} $T$ is a $2$-group. In particular $\C_G(z)/L$ is a $2$-group, so by Corollary \ref{2PowerIndex} $D \leq L$. Let $b_L$ be a block of $L$ covered by $b_z$ with defect group $D$. If $b_L$ is nilpotent, then by \cite[6.5]{exnilblocks} $b_z$ is also nilpotent since $\C_G(z)/L$ is a $2$-group, a contradiction. Hence $b_L$ is not nilpotent. By Lemma \ref{ExNilBlock}, for each $i$ we have that $b_L$ either covers a nilpotent block of $L_i$, or $D \leq L_i$. It follows that either $D \leq L_i$ for some $i$ or $b_L$ covers a nilpotent block of each $L_i$. In the latter case by Lemma \ref{centralproduct} $b_L$ would be nilpotent, a contradiction. Hence $D \leq L_i$ for some $i$ and there is a non-nilpotent block of $L_i$ with defect group $D$. But $|L_i| < |G|$ and $L_i$ is quasisimple, contradicting minimality.

If $G$ is of type $E_6(q)$ or $^2E_6(q)$, then in the notation of~\cite[4.5.1]{classification3} $G$ has (up to isomorphism of centralizers) two conjugacy classes of involutions, with representatives $t_1$ and $t_2$. Suppose first of all that $z$ is of type $t_1$. In this case $\C_G(z)$ has a normal subgroup $X$ of index a power of $2$ such that $X$ is a central product of $L=L_1$ and a cyclic group $A$. Arguing as above, $b_z$ either covers a nilpotent block of $X$, and so is itself nilpotent (a contradiction) or $D \leq X$. So $b_z$ covers a non-nilpotent block $b_X$ of $X$ with defect group $D$. Applying the argument again, either $b_X$ covers nilpotent blocks of $L$ and $A$, in which case $b_X$ would be nilpotent by Lemma \ref{centralproduct} (a contradiction), or $D \leq L$. We have $|L| < |G|$ and $L$ is quasisimple, so by minimality we obtain a contradiction. Consider now the case that $z$ has type $t_2$. Then $\C_G(z)$ has a normal subgroup of index $2$ which is a central product of quasisimple groups, and we can argue as above to again get a contradiction.

If $G$ is of type $E_7(q)$, then in the notation of~\cite[4.5.1]{classification3} $G$ has (up to isomorphism of centralizers) five conjugacy classes of involutions, with representatives $t_1$, $t_4$, $t_4^\prime$, $t_7$ and $t_7^\prime$. In the first three of these cases $T$ is a $2$-group and we may argue exactly as above. In case $t_7$ and $t_7^\prime$, we have $\lvert\C_G(z):C^0\rvert=2$ and by a now familiar argument $D \leq C^0$ and $b_z$ covers a non-nilpotent block of $C^0$ with defect group $D$. There is $X \lhd C^0$ of index $3$ such that $X=LA$, where $L=L_1$ and $A$ is cyclic of order $q \pm 1$. Now by Lemma \ref{Omnibus} $\pcore_2(\Z(A)) = \langle z \rangle$, so $|A|_2 = 2$ and $D \leq L$. By minimality this situation cannot arise since $L$ is quasisimple, and we are done in this case.

{\bf Case 2.} Suppose that $G$ is a quasisimple group of Lie type with center of even order, and further that $q=3$ if $G$ is classical. Note that $G$ cannot be of type $A_n(q)$ or $^2A_n(q)$. Here we must use a different strategy since we may have $\C_G(z)=G$. Let $u \in \Z(D)$ be an involution with $u \neq z$. By Lemma \ref{Omnibus} there is a nilpotent block $b_u$ of $\C_G(u)$ with $b_u^G=B$. As before we refer to~\cite[4.5.2]{classification3} for the structure of $\C_G(u)$, and $\C_G(u) \cong LT$, where $L$ is a central product of either one or two quasisimple groups and $T$ is an abelian group acting on $L$ by inner-diagonal automorphisms. We take a moment to discuss types $D_n(3)$ for $n \geq 4$ even and $^2D_n(3)$. In these two cases the universal version of the group has center of order $4$, and the information given in \cite[4.5.2]{classification3} applies only to the full universal version. In order to extract the required information when $|\Z(G)|=2$ it is necessary to use \cite[4.5.1]{classification3}, taking advantage of the fact that if $Y$ is a finite group, $X \leq \Z(Y)$ with $|X|=2$ and $y \in Y$ is an involution, then $|\C_{Y/X}(yX):\C_Y(y)/X|$ divides $2$. Note also that~\cite[4.5.2]{classification3} gives the fixed point group of an \emph{automorphism} of order $2$ acting on $G$, and that not every such automorphism is realized by an involution in $G$ (this information is contained in the column headed $|\hat{t}|$). We will make no further reference to this fact.

Now $\Z(\C_G(u))$ and $T$ are both $2$-groups, and in each case there is a direct product $E$ of quasisimple groups of Lie type and abelian $2$-groups, with $W \leq \Z(E)$ such that $L \cong E/W$ and $W$ is a $2$-group, and there is a direct product $H$ of finite groups of Lie type such that $E \leq H$ has index a power of $2$ and $H/W$ has a subgroup isomorphic to $\C_G(u)$ of index a power of $2$. Since $W$ and $H/E$ are $2$-groups, by \cite[6.5]{exnilblocks} there are nilpotent blocks $B_E$ of $E$ and $B_H$ of $H$ with defect groups $D_E$ and $D_H$ such that $D_E \leq D_H$ and $D_E/W$ has a subgroup isomorphic to $D$. By Lemma \ref{centralproduct} $B_E$ is a product of nilpotent blocks of finite groups of Lie type, and so by \cite{AnEaton} $D_E$ is abelian. But then $D$ is abelian, a contradiction.
\end{proof}

\begin{Proposition}
Let $B$ be as in Theorem~\ref{main}. Then $D$ is the vertex of the simple $B$-modules.
\end{Proposition}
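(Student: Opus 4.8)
The plan is to transport the statement through the Morita equivalence furnished by Theorem~\ref{main}. The first point is that this equivalence can be taken to be a source algebra equivalence (equivalently, a basic Morita equivalence), and hence to preserve vertices and sources of modules: it is built by composing the equivalences coming from Puig's theorem on nilpotent blocks~\cite{Puig}, from the Külshammer--Puig results on extensions of nilpotent blocks~\cite{exnilblocks}, and from Külshammer's theorem on blocks with normal defect group~\cite{Kuelshammer}, and each of these is established at the level of source algebras. Granting this, it suffices to show that every simple module of $A:=\mathcal{O}[D\rtimes E]$, with $E\le\Aut(D)$ of order $3$, has vertex $D$.

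The latter is a direct computation. Since $D\lhd D\rtimes E$ is a normal $2$-subgroup, it lies in the kernel of every simple $\mathbb{F}[D\rtimes E]$-module, so the simple $A$-modules are exactly the three inflations of the simple $\mathbb{F}E$-modules; in particular each simple $A$-module $S$ is $1$-dimensional and its restriction $S|_D$ is the trivial $\mathbb{F}D$-module. As $|E|=3$ is odd, $D$ is a Sylow $2$-subgroup of $D\rtimes E$, so every $A$-module is relatively $D$-projective and a vertex $Q$ of $S$ is, up to conjugacy, contained in $D$. If $Q$ were proper in $D$, then applying Mackey's formula to an expression of $S$ as a direct summand of $\operatorname{Ind}_Q^{D\rtimes E}(W)$ (with $W$ a source) would exhibit the trivial $\mathbb{F}D$-module $S|_D$ as a direct summand of $\operatorname{Ind}_{D\cap{}^gQ}^{D}(-)$ for some $g$; since $|{}^gQ|=|Q|<|D|$ the group $D\cap{}^gQ$ is a proper subgroup of $D$, and this contradicts the fact that the trivial module of the $2$-group $D$ has vertex $D$. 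Hence $Q=D$, and pulling this back through the vertex-preserving equivalence gives that every simple $B$-module has vertex $D$.

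The one genuinely non-formal ingredient is the first step: one must know that the equivalence of Theorem~\ref{main} respects the local structure closely enough to preserve vertices, which is precisely why it is used as a source algebra equivalence and not merely an abstract Morita equivalence. A softer argument through Knörr's theorem on the vertices of simple modules does not suffice on its own here: the inertial quotient permutes the three maximal subgroups of $D$ transitively and each of them is self-centralizing in $D$, so these maximal subgroups pass the relevant self-centralizing (``$\mathcal{F}$-centric'') test and cannot be excluded as vertices by purely local considerations. This is the reason the classification-dependent structural result of Theorem~\ref{main} is needed for this proposition.
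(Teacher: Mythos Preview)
Your argument is correct, but the paper takes a different and lighter route. Instead of upgrading the Morita equivalence of Theorem~\ref{main} to a source-algebra (basic) equivalence---which, as you acknowledge, is the ``genuinely non-formal ingredient'' and requires going back through every reduction in the proof---the paper uses only the abstract Morita equivalence as stated. In $D\rtimes E$ the three irreducible Brauer characters are linear and extend to ordinary linear (hence height-zero) characters; since a Morita equivalence over $\mathcal{O}$ preserves both heights and the decomposition matrix, the three irreducible Brauer characters of $B$ likewise extend to height-zero ordinary characters of $G$. The paper then invokes Theorem~19.26 of Curtis--Reiner~\cite{Curtis}, which yields vertex $D$ directly from this height-zero lifting, entirely inside $G$. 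Your approach has the virtue of making the vertex computation in $D\rtimes E$ explicit via Mackey's formula, and your remark that Kn\"orr's theorem alone cannot exclude the maximal subgroups of $D$ is well taken; but it rests on a strengthening of Theorem~\ref{main} that the paper neither states nor proves. The paper's approach sidesteps this by converting height information into vertex information using only the invariants that an ordinary Morita equivalence is guaranteed to preserve.
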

\begin{proof}
First we consider the situation in the group $D\rtimes E$. Here the three irreducible Brauer characters are linear and can be extended to irreducible ordinary characters. By Theorem~\ref{main} there is a Morita equivalence between $\mathcal{O}[D\rtimes E]$ and $B$. Under this equivalence the three ordinary linear characters map to irreducible characters of height $0$ in $B$. These characters are again extensions of three distinct Brauer characters, since the decomposition matrix is also preserved under Morita equivalence.
Now the claim follows from Theorem~19.26 in \cite{Curtis}.
\end{proof}

\section*{Acknowledgment}
The first author is supported by a Royal Society University Research Fellowship, and was partially supported in this research by the “Deutsche Forschungsgemeinschaft”. The third author is also supported by the “Deutsche Forschungsgemeinschaft”.

\end{document}